\theoremstyle{plain}
\newtheorem{corollary}{Corollary}[section]
\newtheorem*{corollary*}{Corollary}
\newtheorem{proposition}{Proposition}[section]
\newtheorem*{proposition*}{Proposition}
\newtheorem{theorem}{Theorem}[section]
\newtheorem*{theorem*}{Theorem}
\newtheorem{lemma}{Lemma}[section]
\newtheorem*{lemma*}{Lemma}
\newtheorem{conjecture}{Conjecture}
\newtheorem{question}{Question}
\newtheorem*{claim}{Claim}
\newtheorem*{CD}{The Cauchy-Davenport Thorem}
\newtheorem*{Ruzsa}{Ruzsa's triangle inequality}
\theoremstyle{definition}
\newtheorem{definition}{Definition}[section]
\newtheorem*{definition*}{Definition}
\newtheorem*{akn}{Aknowledgement}
\theoremstyle{remark}
\newtheorem*{obs*}{Observation}
\newtheorem{remark}{Remark}
\newcommand{\Z}[1]{\mathbb{Z}_{#1}}
\def\ex{\mathbf{ex}}
\def\ZZ{\mathbb{Z}}
\def\NN{\mathbb{N}}
\title{Sums of Dilates in $\Z{p}$}
\date{\today}
\author{Gonzalo Fiz Pontiveros}
\address{IMPA\\ 
Estrada Dona Castorina 110\\ 
Jardim Bot\^anico\\ 
Rio de Janeiro\\ 
RJ\\
Brasil} 
\email{gf232@cam.ac.uk}
\begin{document}
\begin{abstract}
We consider the problem of sums of dilates in groups of prime order. We show that given $A\subset \Z{p}$ of sufficiently small density then $$\big| \lambda_{1}A+\lambda_{2}A+\ldots+ \lambda_{k}A \big| \,\ge\,\bigg(\sum_{i}|\lambda_{i}|\bigg)|A|- o(|A|),$$
whereas on the other hand, for any $\epsilon>0$, we construct subsets of density $1/2-\epsilon$ such that $|A+\lambda A|\leq (1-\delta)p$, showing that there is a very different behaviour for subsets of large density. 
\end{abstract}
\maketitle

\section{Introduction}
Combinatorial Number Theory is an area of mathematics which deals with the additive and multiplicative structure of sets, and encompasses techniques from harmonic analysis, ergodic theory, graph theory and number theory (see Tao and Vu~\cite{TaoVu}, for example). A central notion in this field is that of a \emph{sumset}: given an abelian group $G$, a finite subset $A \subset G$ and integers $\lambda_1,\ldots,\lambda_k \in \Z{}$, consider the set
$$X = \lambda_{1}A + \lambda_{2}A + \ldots + \lambda_{k}A,$$
where $A + B = \big\{ a+b : a\in A,\; b\in B \big\}$ and $\lambda A = \big\{ \lambda a : a \in A \big\}$. Finding lower bounds for the size of $X$ (in term of $|A|$) is a basic and central question in the area.

For the group $G = \Z{}$, the situation is fairly well understood. It is easy to prove that $|A+A| \ge 2|A| - 1$, with equality if and only if $A$ is an arithmetic progression. Plagne and Hamidoune \cite{Hamidoune:2002ie} gave  the lower bound $|A+\lambda A|\geq 3|A|-2$ for $\lambda \notin \{-1,0,1\}$. Later on, for $|\lambda|\geq3$, Nathanson~\cite{Nathanson:2008tu} showed that  $|A+\lambda A|\geq \frac{7}{2}|A|- \frac{5}{2}$. 
The case $\lambda=3$ was completely solved in~\cite{Bukh:2008p565} and~\cite{Cilleruelo:2010tn}; in the latter, Cilleruelo, Silva and Vinuesa also characterise all of the extremal sets.

	In the general case, the great breakthrough is due to Bukh, who gave an asymptotically sharp bound in~\cite{Bukh:2008p565}.

\begin{theorem}[Bukh, 2008]\label{Bukh}
For any $k$ coprime integers $\lambda_{1},\lambda_{2},\ldots,\lambda_{k}$ and any finite set $A\subset \ZZ$, we have
\begin{equation}
\big| \lambda_{1} A+\lambda_{2}A+\ldots+ \lambda_{k}A \big| \,\ge\, \bigg(\sum_{i}|\lambda_{i}|\bigg)|A|- o(|A|)
\end{equation}
where the error term $o(|A|)$ depends only on $\lambda_{1},\ldots,\lambda_{k}$.
\end{theorem}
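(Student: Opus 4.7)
The approach I take is by strong induction on $s = \sum_i |\lambda_i|$. Since $|{-}\lambda A| = |\lambda A|$ and sumset cardinality is invariant under negating any summand, I may assume all $\lambda_i > 0$. The case where all $\lambda_i = 1$ reduces to the classical bound $|kA| \ge k|A| - k + 1$, which supplies the base, so the real content is the inductive step where some $\lambda_i \ge 2$.

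For the inductive step, I fix a prime $p$ coprime to every $\lambda_i$ and partition $A$ by residues mod $p$: write $A = \bigsqcup_{r=0}^{p-1} A_r$ with $A_r = A \cap (r + p\ZZ)$. Because $p$ is coprime to each $\lambda_i$, the dilate $\lambda_i A_r$ lies entirely in the single residue class $\lambda_i r \pmod p$, so the sumset $\lambda_1 A + \cdots + \lambda_k A$ partitions by residue mod $p$; the fibre over $s \in \Z{p}$ is a union of translates of sets $\lambda_1 A_{r_1} + \cdots + \lambda_k A_{r_k}$ indexed by tuples $(r_1,\ldots,r_k)$ with $\sum_i \lambda_i r_i \equiv s \pmod p$. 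Each $(A_r - r)/p$ is a subset of $\ZZ$ of size $|A_r|$, so the inductive hypothesis applies inside each fibre and yields a lower bound of roughly $(\sum_i |\lambda_i|)|A_{r_i}| - o(|A_{r_i}|)$ there.

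Combining the fibre-by-fibre estimates is the crux. For each residue $s$, one selects a favourable tuple $(r_1(s),\ldots,r_k(s))$, applies induction to that summand, and then double-counts over $s$; if the selection is balanced then the aggregate reads as roughly $(\sum_i |\lambda_i|)|A| - o(|A|)$, which is the desired bound. The main obstacle, I expect, is controlling the aggregated error: with $p$ fibres each contributing an error $o(|A_{r_i}|)$, the parameter $p$ must be chosen carefully — large enough that the residue decomposition yields genuine savings, yet small enough that the cumulative error remains $o(|A|)$. This very likely forces $p$ to grow slowly with $|A|$ in a two-scale argument. A secondary difficulty is the degenerate regime where $A$ concentrates on very few residues mod $p$; there the balanced selection fails and one presumably needs a separate treatment, either by iterating the decomposition at a larger prime or by invoking Pl\"unnecke--Ruzsa-type inequalities to control doubling directly and thereby force structure on $A$.
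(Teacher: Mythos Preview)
The paper does not prove this theorem at all: it is quoted as a result of Bukh~\cite{Bukh:2008p565} and used as a black box (in Corollary~\ref{diameter} and Corollary~\ref{smallsets}). So there is no ``paper's own proof'' to compare against.

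That said, your proposed argument has a structural gap worth flagging. You induct on $s=\sum_i|\lambda_i|$, but in your inductive step you pick a prime $p$ \emph{coprime} to every $\lambda_i$ and then apply the ``inductive hypothesis'' to each rescaled fibre $(A_r-r)/p$ with the \emph{same} coefficients $\lambda_1,\ldots,\lambda_k$. That does not decrease $s$, so you are not invoking the inductive hypothesis at all --- you are invoking the very statement you are trying to prove, just on smaller sets. The induction never moves.

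Bukh's actual argument makes the opposite choice: he picks a prime $p$ that \emph{divides} one of the $\lambda_j$. Then $\lambda_j A$ lies in a single residue class mod $p$, while the other dilates spread over residues; after splitting by residue and rescaling, the coefficient $\lambda_j$ is replaced by $\lambda_j/p$ inside each fibre, so $s$ genuinely drops and the induction goes through. The delicate part --- which your last paragraph correctly anticipates in spirit --- is handling the imbalance among the $|A_r|$ and summing the fibrewise error terms; Bukh deals with this via an averaging argument and careful bookkeeping of the error. If you rework your outline with $p\mid\lambda_j$ rather than $p$ coprime to the $\lambda_i$, you will be on the right track.
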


However, in the finite field setting $G = \Z{p}$ the problem turns out to be much harder, and almost nothing is known. The only exception is the case $k = 2$ and $\lambda_1 = \lambda_2 = 1$, for which we have the celebrated  \emph{Cauchy-Davenport Theorem}~\cite{Cauchy:1813uva,Davenport:1935toa}:

\begin{CD}
Let $p$ be prime, and let $A, B \subset \Z{p}$ be non-empty. Then
\begin{equation*}
|A+B| \,\ge\, \min\big\{ |A|+|B|-1,p \big\}.
\end{equation*}
\end{CD}

\noindent Thus, in particular, we have the bound $$|A+A|\ge 2|A|-1\;\; \textit{for every}\;\; |A|\leq\frac{p-1}{2} .$$ 
Furthermore, Vosper~\cite{Vosper:1956tp} showed that, as in the case $G=\Z{}$, equality holds if and only if both $A,B$ are arithmetic progressions with the same common difference.

In the light of the two theorems above, it seems natural to conjecture that if $p$ is prime, $\lambda_1, \lambda_2 \in \ZZ$ and $A \subseteq \Z{p}$ then
\begin{equation}
\label{naive}
|\lambda_1A+ \lambda_2A| \ge \min\Big\{ (|\lambda_1|+|\lambda_2|)|A| - O(1),\; p \Big\}.
\end{equation}

However, we have shown that~\eqref{naive} fails in a very strong sense. We prove the following result.

\begin{theorem}
\label{large}
For every $\lambda \in \Z{}$ and $\epsilon>0$, there exists $\delta=\delta(\lambda,\epsilon)>0$ such that the following holds. If $p$ is a sufficiently large prime, then there exists a set $A\subset \Z{p}$ with  $|A|\geq (1/2-\epsilon)p$ such that $|A+\lambda A|\leq (1-\delta)p$. 
\end{theorem}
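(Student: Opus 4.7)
The claim is a construction, so I would proceed in two reductions and then the main construction. First, by the symmetry $A \leftrightarrow -A$ (which sends $\lambda$ to $-\lambda$) I may assume $\lambda \geq 0$. For $\lambda \in \{0, 1\}$ a single interval $A = [0, \lfloor(1/2 - \epsilon)p\rfloor]$ suffices: then $|A + \lambda A| \leq 2|A| - 1 \leq (1 - 2\epsilon)p$, giving $\delta = 2\epsilon$. The substantive case is $\lambda \geq 2$.

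For $\lambda \geq 2$ a single arithmetic progression cannot work: if $A$ is any AP in $\Z{p}$ of length greater than $p/(1+\lambda)$ then $A + \lambda A = d \cdot \{0, 1, \ldots, (1+\lambda)(|A|-1)\} \bmod p$, which equals $\Z{p}$. So any AP construction is bounded in density by $1/(1+\lambda) \leq 1/3$. To push the density to $1/2 - \epsilon$, I would construct $A$ as a union of several APs arranged to force overlaps in $A + \lambda A$ after modular reduction. A concrete candidate is
\[
A \,=\, \bigcup_{i=0}^{k-1}\bigl(it + [0, m-1]\bigr) \bmod p,
\]
with $k, m, t$ to be chosen and with $kt \equiv r \pmod{p}$ for a small offset $r > 0$. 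A direct computation gives
\[
A + \lambda A \,=\, \bigcup_{s=0}^{(1+\lambda)(k-1)} \bigl(st + [0, (1+\lambda)m - \lambda - 1]\bigr) \bmod p,
\]
and writing $s = qk + s'$ with $0 \leq s' < k$ shows $st \equiv s't + qr \pmod p$, so the shifts cluster into $k$ blocks at positions $\{0, t, 2t, \ldots, (k-1)t\}$, each containing at most $\lambda + 1$ shifted intervals offset by multiples of $r$.

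The main obstacle is that this union-of-APs construction alone still saturates at density $1/(1+\lambda)$: when the $k$ blocks in $A + \lambda A$ are disjoint one obtains $|A + \lambda A| \geq k \cdot (1+\lambda)m = (1+\lambda)|A|$, while if the blocks overlap along the cyclic chain, the sumset covers $\Z{p}$ entirely. Breaking past this threshold up to density $1/2 - \epsilon$ requires tuning the parameters $k, m, t, r$ (with $r$ and $m$ both of order $\epsilon p / k$ and $t \approx p/k$ for large $k$) so that $A$ and $\lambda A$ become close to arithmetic progressions with the \emph{same} common difference, pushing $|A + \lambda A|$ toward the Cauchy-Davenport lower bound $2|A| - 1 \approx (1 - 2\epsilon)p$. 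Exact equality is impossible (Vosper's theorem would force $A$ to be a single point), so the construction must be approximately optimal without achieving equality, and the resulting $\delta$ is proportional to $\epsilon$ with a constant depending on $\lambda$. The technical heart of the proof is the explicit accounting of the shifted sumset intervals in $\Z{p}$ demonstrating that a gap of size at least $\delta p$ survives the modular overlaps.
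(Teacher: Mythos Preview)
Your proposal contains a genuine gap: you correctly identify the obstacle that the union-of-APs template saturates at density $1/(1+\lambda)$, but you do not actually overcome it. The final paragraph offers no construction; it only asserts that ``tuning the parameters'' should work. In fact your own suggested scaling is inconsistent: with $m$ of order $\epsilon p/k$ one gets $|A|=km$ of order $\epsilon p$, not $(1/2-\epsilon)p$. More substantively, writing $A=\{0,t,\dots,(k-1)t\}+[0,m-1]$ and expanding $A+\lambda A$ exactly as you do, one finds that either the $k$ blocks of length roughly $(\lambda+1)m$ are disjoint, forcing $|A+\lambda A|\ge(\lambda+1)|A|$, or they overlap cyclically and cover all of $\Z{p}$. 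No choice of $k,m,t,r$ in this one-level template breaks the $1/(\lambda+1)$ barrier, and the heuristic that $A$ and $\lambda A$ should become ``APs with the same common difference'' cannot be realised nontrivially, since $\lambda t\equiv t\pmod p$ forces $t\equiv 0$.

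The paper takes a completely different route. It transfers the problem to the circle $\mathbb{T}$ and observes that multiplication by $-\lambda$ is an ergodic measure-preserving map; Rokhlin's Lemma then produces, for any $\epsilon>0$, a measurable $B\subset\mathbb{T}$ with $\mu(B)\ge 1/2-\epsilon$ and $B\cap(-\lambda)B=\emptyset$, and a discretisation lemma pushes this back to $\Z{p}$. Concretely (Section~4), the set is built from $\lambda$-ary digit conditions: $E_i$ is the set of $x\in\mathbb{T}$ whose first block of $m$ consecutive zeros in base $\lambda$ starts at position $i$, and one takes $A=E_1\cup E_3\cup\cdots\cup E_{2t+1}$. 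This is a \emph{multi-scale} union of intervals whose lengths and gaps range over many powers of $\lambda$, which is exactly what your single-scale AP template lacks. The missing idea in your attempt is this hierarchical, digit-based structure (equivalently, the Rokhlin tower), not a parameter tweak.
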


Although the result is surprising, the proof of Theorem~\ref{large} is relatively straightforward. The key step is to transfer the problem to the continuous setting, where one can apply a known result on the dynamics of measure preserving maps in the circle. In the general setting of~\cite{Bukh:2008p565} we  make the following conjecture:

\begin{conjecture}
\label{con1}
For every $\lambda_1,\ldots,\lambda_k \in \Z{}$ and $\epsilon > 0$, there exists $\delta > 0$ such that the following holds. For every prime $p$, there exists a set $A \subset \Z{p}$ with $|A| \ge (1/k - \epsilon)p$ such that 
$$\big| \lambda_{1}A + \lambda_{2}A + \ldots + \lambda_{k}A \big| \, \le \, (1 - \delta)p.$$
\end{conjecture}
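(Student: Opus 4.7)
The plan is to mimic the strategy used in Theorem~\ref{large} and transfer the question to the continuous setting on the torus $\mathbb{T} = \mathbb{R}/\mathbb{Z}$. Embed $\mathbb{Z}_p \hookrightarrow \mathbb{T}$ via $n \mapsto n/p$; a subset $A \subset \mathbb{Z}_p$ corresponds to a discrete $\tilde{A} \subset \mathbb{T}$, and multiplication by $\lambda \in \mathbb{Z}$ in $\mathbb{Z}_p$ corresponds on $\mathbb{T}$ to the map $T_\lambda(x) = \lambda x \bmod 1$. The problem reduces to producing, for each $(\lambda_1, \ldots, \lambda_k)$ and each $\epsilon > 0$, a measurable $U \subset \mathbb{T}$ with Lebesgue measure $|U| \geq 1/k - \epsilon$ such that $|T_{\lambda_1}(U) + \cdots + T_{\lambda_k}(U)| \leq 1 - \delta$ for some $\delta > 0$. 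A set $A \subset \mathbb{Z}_p$ satisfying the conjecture is then obtained by discretising $U$ at scale $1/p$, using a random translation to control the boundary error (which is of order $O(1/p)$).

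The first natural attempt is the Bohr/arithmetic-progression construction: fix $N$ coprime to every $\lambda_i$ and let $U = V + \tfrac{1}{N}\mathbb{Z}/\mathbb{Z}$ for a short arc $V$ of length $v$. Since each $T_{\lambda_i}$ fixes the finite subgroup $\tfrac{1}{N}\mathbb{Z}/\mathbb{Z}$ setwise, one computes $T_{\lambda_1}(U) + \cdots + T_{\lambda_k}(U) = \tfrac{1}{N}\mathbb{Z}/\mathbb{Z} + \sum_i T_{\lambda_i}(V)$, which fails to cover $\mathbb{T}$ as long as $|\sum_i T_{\lambda_i}(V)| < 1/N$. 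For an arc, this sumset has length at most $v \sum_i |\lambda_i|$, and optimising over $v$ and $N$ yields only $|U| \leq 1/\sum_i |\lambda_i|$. This construction therefore suffices when $\sum_i |\lambda_i| \leq k$, but falls short of the critical density $1/k$ as soon as $\sum_i |\lambda_i| > k$, precisely the regime where the conjecture becomes genuinely interesting.

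To reach density $1/k$ in the general case, one must exploit the dynamics of the maps $T_{\lambda_1}, \ldots, T_{\lambda_k}$ simultaneously. One inductive route is to leverage Theorem~\ref{large}: begin with a set $U^{(2)}$ witnessing the $k=2$ conclusion for some pair drawn from the $\lambda_i$, and iteratively thin it by further dilation-avoidance constraints, one per remaining summand, reducing the density from $\tfrac{1}{2} - \epsilon'$ to $\tfrac{1}{k} - \epsilon$ while preserving a hole in the full sumset. A different route is to seek a self-similar $U$, for instance the attractor of an iterated function system built from the inverse branches of the $T_{\lambda_i}$, whose sumset can then be analysed by a scale-by-scale recursive argument.

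The main obstacle is identifying and proving the dynamical/combinatorial statement on $\mathbb{T}$ that plays the role, for general $k$, which the $k=2$ dynamical lemma plays in the proof of Theorem~\ref{large}. The fact that the critical density is $1/k$---depending on the \emph{number} of summands rather than on $\sum |\lambda_i|$---strongly suggests that any successful construction must encode the $k$-fold multiplicative structure in a global way, and that single-scale constructions like the Bohr-set one above cannot reach the threshold. A substantial new idea from ergodic theory or fractal geometry seems to be needed; in particular, iterating the $k=2$ construction in a naive way is likely to lose density too fast to give the tight bound $1/k$.
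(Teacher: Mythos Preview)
The statement you are attempting is labelled \emph{Conjecture}~\ref{con1} in the paper, and the paper does \emph{not} prove it. The only case established is $k=2$ with $\lambda_1=1$ (Theorem~\ref{large}, via Rokhlin's Lemma on $\mathbb{T}$); the general $k=2$ case is left as Question~\ref{circle}, and for $k\ge 3$ the paper says explicitly in Section~5 that ``substantial new ideas will be required.'' There is therefore no proof in the paper to compare your proposal against.

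Your proposal is not a proof either, and you are honest about this: you outline the transference to $\mathbb{T}$, try the coset-of-a-finite-subgroup construction, correctly compute that it only reaches density $1/\sum_i|\lambda_i|$ rather than $1/k$, float two speculative routes (iterating the $k=2$ result, or a self-similar fractal construction), and then concede that a genuinely new dynamical or combinatorial input is missing. That diagnosis is accurate and is exactly the paper's own assessment. Your Bohr-set calculation is a useful sanity check that the paper does not spell out, and your observation that the threshold $1/k$ depends on the \emph{number} of dilates rather than on $\sum|\lambda_i|$ is a good heuristic for why single-scale constructions cannot succeed. But none of the speculative routes you mention is developed to the point of being an argument; in particular, the ``iterate Theorem~\ref{large}'' idea has no mechanism offered for why thinning by one dilation constraint at a time should cost only a factor $(j-1)/j$ in density at step $j$, which is what reaching $1/k$ would require.

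In short: the statement is open in the paper, your proposal does not close it, and your own final paragraph already says as much.
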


On the other hand, if $A$ is much smaller then we show that the bound~\eqref{naive} \emph{does} hold:

\begin{theorem}\label{smallA}
For every coprime $\lambda_1,\ldots,\lambda_k \in \Z{}$, there exists a constant $\alpha > 0$ such that 
$$\big| \lambda_{1}A+\lambda_{2}A+\ldots+ \lambda_{k}A \big| \,\ge\,\bigg(\sum_{i}|\lambda_{i}|\bigg)|A|- o(|A|)$$ 
for every sufficiently large prime $p$, and every $A \subset \Z{p}$ with $|A| \le \alpha p$.
\end{theorem}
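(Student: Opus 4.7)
The plan is to reduce the problem to the integer setting via Freiman's rectification principle and then invoke Theorem~\ref{Bukh} directly. Set $\Lambda := \sum_i |\lambda_i|$. If $|\lambda_1 A + \cdots + \lambda_k A| > \Lambda |A|$ there is nothing to prove, so assume otherwise. An application of the Plünnecke--Ruzsa inequality to the translates $\lambda_1 A, \ldots, \lambda_k A$ (each of which has cardinality $|A|$) then yields a constant $K = K(\lambda_1,\ldots,\lambda_k)$ for which the iterated sumset $sA$ satisfies $|sA - sA| \le K|A|$, where $s := \Lambda$.

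Next I would appeal to the standard rectification lemma (due to Freiman, and subsequently sharpened by Bilu and Ruzsa): provided $|A| \le \alpha p$ for $\alpha = \alpha(s,K) > 0$ sufficiently small, there exists $\mu \in \Z{p}\setminus\{0\}$ such that the natural lift of $\mu A$ to $\{0, 1, \ldots, p-1\} \subset \ZZ$ realises a Freiman $s$-isomorphism onto a set $A' \subset \ZZ$. Because $s \geq \Lambda$, the isomorphism induces a bijection between the sumsets $\lambda_1 A + \cdots + \lambda_k A$ in $\Z{p}$ and $\lambda_1 A' + \cdots + \lambda_k A'$ in $\ZZ$, so these sets have equal cardinality.

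Applying Theorem~\ref{Bukh} to the coprime dilates $\lambda_1,\ldots,\lambda_k$ acting on $A' \subset \ZZ$ then gives
\[
|\lambda_1 A + \cdots + \lambda_k A| \;=\; |\lambda_1 A' + \cdots + \lambda_k A'| \;\ge\; \Lambda |A'| - o(|A'|) \;=\; \Lambda |A| - o(|A|),
\]
which is the claimed bound.

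The main obstacle is the rectification step. Concretely, one must pick $s$ large enough that every identity $\sum_i \lambda_i a_i = \sum_i \lambda_i b_i$ holding in $\Z{p}$ corresponds to the identical relation in $\ZZ$ after lifting, and simultaneously $\alpha$ small enough (depending on $s$ and $K$) that a suitable dilation $\mu$ confines $\mu(sA - sA)$ to an interval of length strictly less than $p$. Both issues are resolved by a classical Dirichlet--pigeonhole argument over the characters of $\Z{p}$, as used in Freiman's original paper and in the subsequent refinements by Bilu and Ruzsa; once this auxiliary rectification statement is in hand, the remaining deduction is essentially a bookkeeping exercise.
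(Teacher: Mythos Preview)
Your proposal is correct and follows essentially the same route as the paper: assume the dilate sumset is at most $\Lambda|A|$, deduce small doubling for $A$, rectify to $\ZZ$ via a Freiman-type lemma, and apply Theorem~\ref{Bukh}. The only cosmetic difference is in the specific tools invoked: the paper bounds $|A+A|$ using Ruzsa's triangle inequality and then applies the Green--Ruzsa diameter theorem, whereas you bound $|sA-sA|$ via Pl\"unnecke--Ruzsa and cite the Bilu--Ruzsa rectification lemma; these are interchangeable packagings of the same rectification principle.
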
 
 Very recently,  Plagne~\cite{Plagne:2011eu} has given bounds along the same spirit for sumsets of the form $A+\lambda A$. Namely, it was shown that there exists a function $f_\lambda$ and a constant $w(\lambda)$ such that
\begin{theorem}[Plagne, 2011]
 \begin{equation}
 \label{Plagne}
    |A+ \lambda A|\geq \min\Big(f_\lambda(\alpha)|A|-w(\lambda), p\Big),
    \end{equation}
where $|A|=\alpha p$ and $f_\lambda(\alpha)$ is defined to be the maximum of $2$ and the unique solution to the equation
$$(\lambda+1)\sin{\Big(\frac{\pi}{\lambda+1}\Big)}(1-\alpha x)=x^{3/2}\sin{\Big(\frac{\pi}{x}\Big)}.$$
\end{theorem}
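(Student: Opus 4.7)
The plan is to combine a Fourier-analytic extraction of structure with an integer-side bound, with the sine terms arising from the sharp Fourier transform of an arithmetic progression. All arguments take place in $\Z{p}$ with the standard characters $\chi_r(x)=\exp(2\pi i rx/p)$.

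First, write $|A|=\alpha p$ and assume $|A+\lambda A|=\beta p$ for some $\beta<1$; the goal is to derive a lower bound on $\beta$ in terms of $\alpha$. By Cauchy--Schwarz applied to the convolution $\mathbf{1}_A\ast\mathbf{1}_{\lambda A}$ (which is supported on $A+\lambda A$ and has total sum $|A|^2$), one obtains a lower bound on the additive energy $E(A,\lambda A) = \tfrac{1}{p}\sum_r |\widehat{\mathbf{1}_A}(r)|^2|\widehat{\mathbf{1}_A}(\lambda r)|^2$. Combined with Parseval's identity, this forces the existence of some non-trivial frequency $r\ne 0$ at which $|\widehat{\mathbf{1}_A}(r)|$ is comparable to its maximum value $|A|$, quantitatively in terms of $\alpha/\beta$.

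Next, I would use the standard interpretation of a large Fourier coefficient: writing $\psi(x)=rx/p\bmod 1$, the fact that $|\widehat{\mathbf{1}_A}(r)|$ is large means that $\psi(A)$ is concentrated inside a short arc of the circle $\mathbb{R}/\mathbb{Z}$. Since multiplication by $r$ is a bijection on $\Z{p}$ preserving sumset sizes, one may replace $A$ by $rA$ and assume $A$ sits mostly inside an arithmetic progression of length $\rho p$, for a parameter $\rho$ controlled by the Fourier coefficient's magnitude. Lifting this set to $\ZZ$ and applying Bukh's integer bound (Theorem~\ref{Bukh}) with $k=2$ then yields an inner lower bound of the form $f_\lambda(\alpha)|A|-w(\lambda)$ after optimizing $\rho$.

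The sine formula appears because the Fourier transform of an arithmetic progression $P$ of length $L$ in $\Z{p}$ is exactly $\sin(\pi Lt/p)/\sin(\pi t/p)$, so the extremal example for Step 1 is an AP, and the concentration radius extracted in Step 2 is governed by the same sine factor. Balancing these two sides produces the transcendental equation $(\lambda+1)\sin(\pi/(\lambda+1))(1-\alpha x)=x^{3/2}\sin(\pi/x)$, with the factor $(\lambda+1)\sin(\pi/(\lambda+1))$ corresponding to the critical AP of length roughly $p/(\lambda+1)$ whose $(\lambda+1)$-fold dilate just fails to fill $\Z{p}$; the $\min(\cdot,p)$ appears trivially since $A+\lambda A\subseteq\Z{p}$.

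The main obstacle is obtaining the sharp constants through the rectification step. A naive Pl\"unnecke--Ruzsa or Freiman--Ruzsa pass would introduce polynomial or logarithmic losses that obscure the exact sine trade-off, so the core of the proof must be a carefully balanced direct Fourier argument that compares $|\widehat{\mathbf{1}_A}(r)|$ to $|\widehat{\mathbf{1}_P}(r)|$ for the extremal progression $P$, and extracts the right transcendental balance rather than taking a structural detour through approximate progressions.
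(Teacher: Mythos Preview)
The paper does not prove this statement at all: it is quoted as an external result of Plagne~\cite{Plagne:2011eu}, stated for context and comparison with Theorem~\ref{smallA}, and no proof or sketch is offered. There is therefore nothing in the paper against which to check your proposal.

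For what it is worth, your outline is in the right spirit for results of this type --- a large-Fourier-coefficient/rectification argument is indeed how Plagne proceeds --- but as written it is only a plan, not a proof. The two places where real work is hidden are (i) the precise quantitative link between the additive energy lower bound and the size of the largest non-trivial Fourier coefficient (Cauchy--Schwarz on $\mathbf{1}_A\ast\mathbf{1}_{\lambda A}$ controls $E(A,\lambda A)$, but extracting a single large $|\widehat{\mathbf{1}_A}(r)|$ from this with the correct dependence on $\alpha,\beta$ requires a further averaging step), and (ii) the optimisation that produces the exact transcendental equation. Your heuristic explanation for the sine factors is correct, but the exponent $3/2$ on $x$ and the specific form $(1-\alpha x)$ on the left come out of a careful balancing that you have not actually carried out. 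None of this can be assessed against the present paper, which simply cites the result.
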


The range of densities $\alpha$ for which the function $f_\lambda$ yields non-trivial bounds (i.e., $f_\lambda(\alpha)>2$) is much bigger than the one obtained in Theorem ~\ref{smallA} and tends to $\frac{1}{2}-\frac{\sqrt{2}}{\pi}\approx 0.0498$ as $|\lambda|\to \infty$. However, the value of $f_\lambda(\alpha)$ is far from the truth when looking at very small $\alpha$, indeed $f_\lambda(0)< 2.16$ for all $\lambda \in \Z{}$. So the bound for large $\lambda$ is essentially  
$$|A+\lambda A|\geq \min\Big( 2.16 |A|, 0.0996p\Big),$$
whereas our method yields
$$|A+\lambda A|\geq \min\Big( (\lambda+1)|A|,\epsilon(\lambda)p\Big).$$
Hence the bounds are in a sense complementary.

\section{Small densities}
The strategy to deal with sets of small density is to show that they in fact behave like subsets of the integers with respect to addition and thus we may transfer the relevant bounds. This kind of rectification technique was first introduced by Freiman~\cite{Freiman} in the proof of his well known theorem and is now a standard tool to transfer results from the integer setting. 

Suppose that the set $A$ is contained in the interval $\Big\{0,\ldots, \big\lfloor \frac{p}{M}\big \rfloor \Big\}$ where $M=\sum|\lambda_i|$. It is clear that when looking at sums of the form $\lambda_1a_{1}+\lambda_2a_{2}+\ldots \lambda_k a_k$ for $a_1, a_2, \ldots, a_k \in A$ there is never any `wrap around' and hence we might as well think of them as happening in $\Z{}$. We make this statement more precise:

\begin{definition}
The {\it diameter} of a subset $A\subseteq \Z{p}$, denoted $l(A)$,  is defined as the
smallest integer $l$ for which there exists some $x,d \in \Z{p}$ such that $A \subseteq\{x,x + d,...,x + (l-1)d\}$. In words, the length of the shortest arithmetic progression containing $A$.
 \end{definition}
\begin{definition}
Let $A$ and $B$ be finite subsets of abelian groups (not necessarily equal). We say that $A$ and $B$ are $m$-\emph{Freiman isomorphic} if there exists a bijection $f:A\to B$ such that 
\begin{eqnarray*}
f(a_1)+f(a_2)\ldots+f(a_m)&=& f(a_1')+f(a_2')\ldots+f(a_m')\\
&\Updownarrow&\\
a_1+\ldots+a_m&=&a_1'+\ldots+a_m'.
\end{eqnarray*}
\end{definition}

Formally speaking, if $l(A)< \frac{1}{M}p$ where $M=\sum_1^k|\lambda_i|$, then there exists $\tilde A \subset \Z{}$ such that $\tilde A$ is $M$-Freiman isomorphic to $A$. In particular it follows that, $|\lambda_1\tilde A + \lambda_2\tilde A+\ldots +\lambda_k\tilde A|=|\lambda_1A +\lambda_2A+\ldots \lambda_kA|$ and therefore we are free to apply any known bounds in the integer setting. 
\begin{corollary}

\label{diameter}
Let $\lambda_1, \lambda_2\ldots \lambda_k$ be relatively prime integers and $A\subset \Z{p}$ be a subset with diameter $l(A)<\frac{1}{M}p$ where $M=\sum_1^k|\lambda_i|$. Then 
\begin{equation}
|\lambda_1A+\lambda_2 A+\ldots +\lambda_kA|\geq \big(|\lambda_1|+|\lambda_2|+\ldots +|\lambda_k|\big)|A| - o(|A|),
\end{equation}
where the $o(|A|)$ is the error term given by Theorem~\ref{Bukh}. 
\end{corollary}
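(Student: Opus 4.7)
My proof plan follows the outline suggested by the paragraph preceding the corollary, making the Freiman isomorphism step fully rigorous.

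First I would reduce to the case where $A$ is a subset of a short initial interval. By hypothesis there exist $x, d \in \Z{p}$ (with $d \neq 0$, so $d$ invertible mod $p$) such that $A \subseteq \{x, x+d, \ldots, x+(l-1)d\}$ for some $l < p/M$. Consider the affine map $\varphi: \Z{p} \to \Z{p}$ given by $\varphi(y) = d^{-1}(y-x)$. This is a bijection of $\Z{p}$, and a direct computation shows that for any $a_1,\ldots,a_k \in A$,
$$\lambda_1 \varphi(a_1) + \cdots + \lambda_k \varphi(a_k) \,=\, d^{-1}\bigl(\lambda_1 a_1 + \cdots + \lambda_k a_k\bigr) - d^{-1}\bigl(\textstyle\sum_i \lambda_i\bigr)x,$$
so $y \mapsto d^{-1}(y - (\sum_i\lambda_i)x)$ gives a bijection between $\lambda_1 A + \cdots + \lambda_k A$ and $\lambda_1 \varphi(A) + \cdots + \lambda_k \varphi(A)$. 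Replacing $A$ by $\varphi(A)$, I may assume $A \subseteq \{0, 1, \ldots, l-1\}$ with $l < p/M$.

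Next I would lift $A$ to $\ZZ$. Let $\tilde A \subset \ZZ$ be the set of integer representatives of $A$ lying in $\{0,1,\ldots,l-1\}$, and let $\pi: \ZZ \to \Z{p}$ be the reduction map. The key claim is that the natural bijection $\tilde A \to A$ is an $M$-Freiman isomorphism. For this, note that for any $a_1, \ldots, a_M \in \tilde A$ we have $0 \le a_1 + \cdots + a_M \le M(l-1) < p$, so $\pi$ is injective on $M$-fold sums from $\tilde A$; consequently an identity $a_1+\cdots+a_M = a_1'+\cdots+a_M'$ in $\ZZ$ holds if and only if it holds in $\Z{p}$. Since $M = \sum|\lambda_i|$, any signed linear relation $\sum \lambda_i a_i = \sum \lambda_i a_i'$ can be rearranged (moving negative terms to the other side) into an unweighted equality of sums of exactly $M$ terms on each side, so the $M$-Freiman isomorphism transports weighted sumset equalities as well. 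It follows that $|\lambda_1 A + \cdots + \lambda_k A| = |\lambda_1 \tilde A + \cdots + \lambda_k \tilde A|$, where the right-hand side is computed in $\ZZ$.

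Finally I would apply Bukh's Theorem~\ref{Bukh} directly to $\tilde A \subset \ZZ$, using the hypothesis that the $\lambda_i$ are coprime, to conclude
$$|\lambda_1 \tilde A + \cdots + \lambda_k \tilde A| \,\geq\, \Bigl(\textstyle\sum_i |\lambda_i|\Bigr)|\tilde A| - o(|\tilde A|),$$
and since $|\tilde A| = |A|$, this is precisely the desired bound. The only step that requires any care is the reduction to the interval and the verification that the affine bijection commutes with the sumset structure; once $A$ sits in $\{0,\ldots,l-1\}$ the rest is essentially bookkeeping, so I do not anticipate a substantive obstacle.
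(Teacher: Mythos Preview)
Your proposal is correct and follows exactly the approach the paper sketches in the paragraph immediately preceding the corollary: reduce via an affine change of variables to $A\subseteq\{0,\ldots,l-1\}$, lift to $\tilde A\subset\ZZ$, observe that the lift is an $M$-Freiman isomorphism so that $|\lambda_1\tilde A+\cdots+\lambda_k\tilde A|=|\lambda_1 A+\cdots+\lambda_k A|$, and then invoke Theorem~\ref{Bukh}. You have simply spelled out the details (the affine map, the rearrangement of signed relations into unweighted $M$-term sums) that the paper leaves implicit.
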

\begin{remark}
In the case $k=2$, $\lambda_1\in \{1,2\}$ and $\lambda_2\in\{\pm q\}$ for $q$ a prime,  very precise error terms of the form  $g(\lambda_1,\lambda_2)$ are given in~\cite{Cilleruelo:2009cf} and  \cite{Hamidoune:2011wm} for the integer case and hence are also transferable to $\Z{p}$. 
\end{remark}
A priori, we can only guarantee that $l(A)\leq \frac{1}{k}p$ for sets with $|A|<\frac{\log p}{\log k}$ (using the pigeon hole principle) and this bound is best possible up to a constant: for example, a random subset of $\Z{p}$ of size $4\log p$  has diameter larger than $p/2$ with high probability. 

However, if we are given the additional information that the sumset  $A+A$ is small,  then it is possible to bound well the diameter of the set $A$. Freiman~\cite{Freiman} showed that for all $C>0$ there exists $\epsilon(C)>0$ such that 
if $|A+A|\leq C |A|$ and $|A|< \epsilon(C)p$ then the set $A$ is Freiman isomorphic to a subset of the integers. More recently, Green  and Ruzsa \cite{Green:2006p564} showed the same result with a different approach that yielded much better bounds. We will use their formulation:

\begin{theorem}[Green-Ruzsa, 2006]
\label{Green-Ruzsa}
Let $A$ be a subset of $\Z{p}$, $p$ a prime, with $|A|=\alpha N$ and assume that $\min\{|A+A|,|A-A|\}\leq K|A|$. If $\alpha\leq (16K)^{-12K^{2}}$, then the diameter of $A$ is at most 
\begin{equation}
12\alpha^{1/4}\sqrt{\log{(1/\alpha)}}N.
\end{equation}
\end{theorem}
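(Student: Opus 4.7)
The plan is a Fourier-analytic rectification argument: use small doubling to locate a Bohr set inside an iterated difference set of $A$, extract a long arithmetic progression from the Bohr set via the geometry of numbers, and then transfer this structure back to $A$ itself to conclude the claimed diameter bound.

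First, I would iterate the Plünnecke–Ruzsa inequality to deduce from $\min\{|A+A|,|A-A|\}\leq K|A|$ that $|2A-2A|\leq K^{O(1)}|A|$. Next, a Bogolyubov-type Fourier argument on $\Z{p}$: examining the large-spectrum set $\mathrm{Spec}_{\eta}(A)=\{\xi:|\hat{1_{A}}(\xi)|\geq \eta|A|\}$ and invoking Chang's theorem controls its dissociated dimension by roughly $\eta^{-2}\log(1/\alpha)$. A standard Plancherel/Bogolyubov calculation then shows that $2A-2A$ contains a Bohr set $B(\Lambda,1/4)$ whose dimension $d=d(K,\alpha)$ is correspondingly small.

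With such a Bohr set in hand, Minkowski's second theorem produces an arithmetic progression $\{0,q,2q,\ldots,Lq\}\subset B(\Lambda,1/4)\subset 2A-2A$ with $L\gtrsim (1/4)^{|\Lambda|}p$. Dilating $A$ by $q^{-1}$, I may assume this progression has common difference $1$, so $\{0,1,\ldots,L\}\subset 2A-2A$. This forces $A$ to be $8$-Freiman isomorphic to a set of integers lying in a short interval; a classical rectification argument in $\Z{}$ (together with Freiman's theorem on sets of small doubling in the integers) then shows that $A$ itself is contained in an arithmetic progression of length $O(p/L)$, which after optimizing the parameters $\eta$ and $d$ yields the stated bound $12\alpha^{1/4}\sqrt{\log(1/\alpha)}N$.

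The main obstacle is the delicate parameter optimization required to obtain precisely the exponent $\alpha^{1/4}$ together with the logarithmic factor $\sqrt{\log(1/\alpha)}$. The natural choice $\eta\approx \alpha^{1/4}$ forces $|\Lambda|\approx \alpha^{-1/2}\log(1/\alpha)$, and the threshold $\alpha\leq (16K)^{-12K^{2}}$ is precisely what is required to ensure that the extracted progression length $L$ remains large enough for the rectification step to go through. Pinning down the explicit numerical constant $12$, rather than an implicit $C(K)$, means carefully tracking constants through each of the Plünnecke–Ruzsa, Chang, Bogolyubov, and Minkowski steps, which is fiddly but essentially routine.
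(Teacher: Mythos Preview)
The paper does not prove this statement at all: Theorem~\ref{Green-Ruzsa} is quoted from Green and Ruzsa's 2006 paper and used as a black box in the proof of Corollary~\ref{smallsets}. So there is no ``paper's own proof'' to compare against; your proposal is really an attempt to reprove the cited result.

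As a sketch of the Green--Ruzsa argument itself, your outline has a genuine quantitative gap. With your choice $\eta\approx\alpha^{1/4}$, Chang's lemma gives a spectrum of dissociated dimension $|\Lambda|\approx\alpha^{-1/2}\log(1/\alpha)$, and the arithmetic progression you extract from the Bohr set then has length $L\gtrsim (1/4)^{|\Lambda|}p$, which is \emph{exponentially} small in $\alpha^{-1/2}$. The resulting diameter bound $O(p/L)$ is therefore of shape $\exp\!\big(c\,\alpha^{-1/2}\log(1/\alpha)\big)$, vastly worse than the claimed $12\alpha^{1/4}\sqrt{\log(1/\alpha)}\,N$. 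No rebalancing of $\eta$ rescues this: the Bogolyubov--Chang--Minkowski pipeline inevitably produces a Bohr dimension growing with $\log(1/\alpha)$, and hence only sub-polynomial (in $p$) progressions, which cannot give a diameter bound that is a fixed power of $\alpha$ times $N$.

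The actual Green--Ruzsa proof is structurally different and more economical. Rather than locating a full Bohr set in $2A-2A$, they use the small-doubling hypothesis to produce a \emph{single} large nonzero Fourier coefficient $|\widehat{1_A}(r)|\gg |A|/\sqrt{K}$ (via Cauchy--Schwarz on $1_A*1_A$), which after dilation by $r^{-1}$ means $A$ has nontrivial bias towards a half-circle. A direct covering/iteration argument in one Fourier direction --- not a multidimensional Bohr-set argument --- then squeezes $A$ into a short progression, and it is this one-dimensional analysis that produces the exponent $1/4$ and the $\sqrt{\log(1/\alpha)}$ factor. If you want to reconstruct the proof, that is the mechanism to aim for.
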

Equipped with this tool, it is now a simple task to deduce a bound for sets of small density by transferring the known $\Z{}$ bounds: 
\begin{corollary}
\label{smallsets}
For every $\bar\lambda=(\lambda_1,\lambda_2,\ldots,\lambda_k)$, there exits some $\alpha(\bar\lambda)>0$ such that for all $A\subset \Z{p}$ with $|A|\leq \alpha p$,
\begin{equation*}
|\lambda_1A+ \lambda_2 A+\ldots+\lambda_k A|\geq \Big(|\lambda_1|+|\lambda_2|+\ldots+|\lambda_k|\Big)|A|- o(|A|).
\end{equation*}
\end{corollary}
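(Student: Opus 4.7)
The plan is to combine the Green-Ruzsa rectification theorem (Theorem 2.2) with the transfer principle of Corollary 2.1, reducing the problem to Bukh's integer bound.

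The main preliminary step is to show that if $|\lambda_1 A + \ldots + \lambda_k A| \leq M|A|$ (where $M = \sum_i |\lambda_i|$), then $A$ has bounded doubling, $|A+A| \leq K|A|$, for some $K = K(\bar\lambda)$. This is a Plünnecke-Ruzsa type estimate in the ``reverse'' direction: the standard Plünnecke-Ruzsa inequality bounds a dilate sum by a power of the doubling, whereas here we need the opposite. Fortunately, iterated applications of Ruzsa's triangle inequality yield such a bound, with exponent depending only on $\bar\lambda$, so that $|A+A|/|A|$ and $|\lambda_1 A+\ldots+\lambda_k A|/|A|$ are polynomially related.

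Granted this claim, the proof concludes quickly. Either $|\lambda_1 A + \ldots + \lambda_k A| > M|A|$ and the conclusion is trivial, or the claim gives $|A+A| \leq K|A|$. Choose $\alpha = \alpha(\bar\lambda)$ small enough that both $\alpha \leq (16K)^{-12K^2}$ and $12 \alpha^{1/4} \sqrt{\log(1/\alpha)} < 1/M$; then Theorem 2.2 yields $l(A) < p/M$, and Corollary 2.1 delivers
$$|\lambda_1 A + \ldots + \lambda_k A| \geq M|A| - o(|A|),$$
with error term inherited from Bukh's theorem in $\ZZ$. The main obstacle is the reverse Plünnecke-Ruzsa estimate of the preliminary step; everything else is a routine matter of tuning constants so that the quantitative diameter bound from Green-Ruzsa fits inside the $p/M$ threshold required by Corollary 2.1.
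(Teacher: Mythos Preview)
Your approach is correct and matches the paper's: bound the doubling via a Ruzsa-type inequality, apply Green--Ruzsa to control the diameter, then invoke Corollary~2.1. The paper gets the doubling bound in a single application of Ruzsa's triangle inequality rather than by iteration: with $B=D=\lambda_1 A$ and $C=\lambda_2 A+\cdots+\lambda_k A$ one has
\[
|A+A|=|\lambda_1 A+\lambda_1 A|\le \frac{|\lambda_1 A+\cdots+\lambda_k A|^2}{|\lambda_2 A+\cdots+\lambda_k A|}\le M^2|A|,
\]
so $K=M^2$ suffices directly.
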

\begin{proof}
Let $M=\sum|\lambda_i|$. We may assume that $|\lambda_1A+ \lambda_2A+\ldots+\lambda_kA|\leq  M|A|$ as otherwise we are done. We  make use of the following elementary inequality due to Ruzsa~\cite{Ruzsa:1996vz}:
\begin{Ruzsa}
Given any $B,C,D$ finite subsets of some abelian group $G$. Then, 
\begin{equation}
|B+D|\leq \frac{|B+C||C+D|}{|C|}.
\end{equation}
\end{Ruzsa}
\noindent Setting $B=D=\lambda_1A$ and $C=\lambda_2A+\ldots+\lambda_k A$ we have that 
\begin{equation}
|A+A|=|\lambda_1A+\lambda_1A|\leq \frac{|\lambda_1A+\ldots+\lambda_k A|^{2}}{|\lambda_2A+\ldots+\lambda_k A|}\leq M^2|A|
\end{equation}
Now pick $\alpha>0$ such that $12\alpha^{1/4}\sqrt{\log{(1/\alpha)}}<M^{-1}$
and $\alpha<(16M)^{-12M^{2}}$. By the Green-Rusza Theorem, any set $A\subset \Z{p}$  with $|A|\leq \alpha|A|$
and $|A+A|\leq M|A|$ will have diameter at most $M^{-1}p$ and hence it follows from Corollary~\ref{diameter} that
\begin{equation*}
|\lambda_1A+\ldots+\lambda_kA|\geq \Big(|\lambda_1|+\ldots+|\lambda_k|\Big)|A|- o(|A|),
\end{equation*}
as required.
\end{proof}

\section{Large densities}
We begin with a simple lemma that already shows that the behaviour of sets of large density is indeed quite different from the integers. Indeed, if \eqref{naive} were true, then any set with density greater than $\frac{1}{|\lambda_1|+|\lambda_2|}$ ought to satisfy $\lambda_1A+\lambda_2A = \Z{p}$. Instead we have, 

\begin{lemma}
\label{big}
For any prime $p$, we may find a set $A\subset \Z{p}$ such that $|A|=\frac{1}{2}p-o(p)$ and $0 \notin \lambda_1A+\lambda_2A$ 

\end{lemma}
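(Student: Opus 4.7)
The plan is to reformulate the conclusion as a multiplicative orbit-picking problem in $\Z{p}^{*}$. For $p$ large enough we may assume $p\nmid \lambda_1\lambda_2(\lambda_1+\lambda_2)$; indeed, the degenerate case $\lambda_1+\lambda_2\equiv 0\pmod{p}$ forces $0\in \lambda_1 A+\lambda_2 A$ for every non-empty $A$ via $a_1=a_2$, so it must be excluded. Set $\mu:=-\lambda_2\lambda_1^{-1}\in \Z{p}^{*}$, so $\mu\neq 0,1$. The equation $\lambda_1 a_1+\lambda_2 a_2=0$ becomes $a_1=\mu a_2$, so the task reduces to building $A\subseteq \Z{p}^{*}$ with $A\cap \mu A=\emptyset$ and $|A|=p/2-o(p)$.

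The second step is to exploit the orbit structure of multiplication by $\mu$. Let $d$ be the multiplicative order of $\mu$ modulo $p$; then $\langle\mu\rangle$ partitions $\Z{p}^{*}$ into $(p-1)/d$ orbits, each a cycle $\{x,\mu x,\mu^{2}x,\ldots,\mu^{d-1}x\}$. Restricted to a single orbit, the requirement $A\cap\mu A=\emptyset$ is exactly the condition that the induced subset be an independent set in the cycle graph $C_d$, whose independence number is $\lfloor d/2\rfloor$ (attained by alternating positions). Taking such a choice in every orbit and unioning, we obtain
\[
|A|\;\geq\;\frac{p-1}{d}\left\lfloor\frac{d}{2}\right\rfloor\;\geq\;\frac{(p-1)(d-1)}{2d}\;=\;\frac{p}{2}\,-\,O\!\left(\frac{p}{d}\right).
\]

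It remains to show $d\to\infty$ as $p\to\infty$, which is what upgrades the $O(p/d)$ error to the required $o(p)$. The order $d$ is the least positive integer with $\lambda_1^{d}\equiv(-1)^{d}\lambda_2^{d}\pmod{p}$. When $|\lambda_1|\neq|\lambda_2|$ the integer $\lambda_1^{d}-(-1)^{d}\lambda_2^{d}$ is non-zero and of absolute value at most $2\max(|\lambda_1|,|\lambda_2|)^{d}$, so divisibility by $p$ forces $d\geq\log(p/2)/\log\max(|\lambda_1|,|\lambda_2|)\to\infty$. The remaining case $|\lambda_1|=|\lambda_2|$ combined with $\lambda_1\neq-\lambda_2$ leaves only $\lambda_1=\lambda_2$, i.e.\ $\mu=-1$ and $d=2$, which already yields $|A|\geq (p-1)/2$ directly by picking one element from each pair $\{x,-x\}$. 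The whole argument is essentially elementary; the only delicate point is this last step, namely checking that in every non-trivial configuration of $\lambda_1,\lambda_2$ the order of $\mu$ genuinely tends to infinity with $p$.
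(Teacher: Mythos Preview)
Your argument is correct and matches the paper's approach almost verbatim: the paper defines the digraph $x\to y$ iff $\lambda_1 x+\lambda_2 y=0$, observes it decomposes into directed cycles of common length $k=\Omega(\log p)$ via exactly the divisibility bound you give on $(-\lambda_1)^k\equiv\lambda_2^k\pmod p$, and then picks alternate vertices in each cycle. Your phrasing in terms of the multiplicative order of $\mu=-\lambda_2\lambda_1^{-1}$ and the orbit decomposition of $\Z{p}^{*}$ is equivalent, and you are in fact slightly more careful than the paper about the edge case $|\lambda_1|=|\lambda_2|$.
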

\begin{proof}
Define a digraph $\Gamma$ with on the vertex set  $\Z{p}$ where $x\to y$ if and only if $\lambda_1x+\lambda_2y=0$. We are done if we find a large independent set in $\Gamma$. Now for any $x \in \Z{p}$ we have that $d_{-}(x)=d_{+}(x)=1$ and hence $\Gamma$ is the disjoint union of directed cycles. 

Suppose we have a cycle $x_{1}\to x_{2}\to \ldots \to x_{k}\to x_{1}$. By definition of $\Gamma$ we have that $\lambda_2 x_{i+1}=(-\lambda_1)x_{i} \mod{p}$ for all $1\geq i \geq k-1$. Hence, $k$ can only be the smallest positive integer for which we have that 
$$ (-\lambda_1)^{k}= \lambda_2^k \mod{p}$$
Certainly the above can only happen if $k\geq \lfloor\log_{\max\{|\lambda_1|,|\lambda_2|\}}{p}\rfloor-1$. Thus all cycles in $\Gamma$ have the same length $$k= \Omega(\log p)$$

This is good news since it is simple to find independent sets in cycles by picking alternative vertices, that is as many as $\lfloor k/2 \rfloor$ vertices in each cycle.
In this way we obtain an independent set $A$ of density $\frac{\lfloor k/2 \rfloor}{l}\geq\frac{1}{2} -O(\frac{1}{\log p})$ and therefore
$$|A|\geq \frac{1}{2}p - O\Big(\frac{p}{\log{p}}\Big),$$
as claimed.
\end{proof}
Note that, on the other hand, the Cauchy-Davenport Theorem gives us the lower bound
$$|\lambda_1A+\lambda_2 A|\geq \min\{ 2|A|-1,p\}$$
and thus any set of density above $1/2$ must satisfy that $\lambda_1A+ \lambda_2A=\Z{p}$.  
 
 One might wonder if this is some kind of exception and if instead we demand that $|\lambda_1A+\lambda_2A|\leq p-2$ then we have a non trivial upper bound on the density of $A$, meaning that it is at most $1/2-\epsilon$ for some $\epsilon>0$. 
We shall show that this is not the case; our strategy is to transfer the problem into a continuous setting. The following proposition makes this transference  explicit:

\begin{lemma}
\label{transfer}
Let $\mathbb{T}$ denote the unit circle, equipped with the Lebesgue measure $\mu$ and $\lambda_1, \lambda_2 \in \Z{}$. For any $\alpha \in [0,1]$ the following are equivalent:
\begin{itemize}
\item[(i)] Given any $\epsilon_{1}>0$, there exists a measurable set $A\subset \mathbb{T}$ with $\mu(A)\geq \alpha-\epsilon_{1}$ such that $\lambda_1A\cap \lambda_2A=\emptyset$.
\item[(ii)] Given any $\epsilon_{2}>0$, there exists $p_{0}$ and $\delta>0$ such that for all primes $p\geq p_{0}$ and any $t\le \delta p$, there exists $A\subset \Z{p}$ with $|A|\geq (\alpha -\epsilon_{2})p$, and satisfying $\{-t, -t+1,\ldots,0,1,\ldots, t\}\notin \lambda_1A-\lambda_2A$.
\end{itemize}

\end{lemma}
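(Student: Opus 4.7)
The plan is to transfer between the discretized measure $\frac{1}{p}\sum_{j\in A_p}\delta_{j/p}$ on $\mathbb{T}$ and the continuous Lebesgue picture. The forward direction (i)$\Rightarrow$(ii) is essentially a discretization of a continuous set, while the reverse (ii)$\Rightarrow$(i) is a weak-$\ast$ compactness argument followed by the extraction of a bona fide set from a density function.

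For (i)$\Rightarrow$(ii), I first apply (i) with $\epsilon_1=\epsilon_2/3$, and use inner regularity to pick a compact $K\subset A$ with $\mu(K)\geq \alpha-2\epsilon_2/3$. Since $\lambda_1 K$ and $\lambda_2 K$ are disjoint compact subsets of $\mathbb{T}$, they lie at positive distance $2d>0$. I then cover $K$ by a finite union of closed intervals $A'$ that is so close to $K$ that $\lambda_1 A'$ and $\lambda_2 A'$ remain at distance at least $d$. Setting $\delta=d/2$ and $A_p=\{j: j/p\in A'\}\subset \Z{p}$, Jordan-measurability of $A'$ yields $|A_p|/p\to \mu(A')\geq \alpha-\epsilon_2$, so $|A_p|\geq (\alpha-\epsilon_2)p$ once $p$ is large. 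Finally, if $a,b\in A_p$ satisfied $\lambda_1 a-\lambda_2 b\equiv s\pmod p$ with $|s|\leq t\leq \delta p$, then in $\mathbb{T}$ the identity $\lambda_1(a/p)-\lambda_2(b/p)=s/p$ would put the points $\lambda_1(a/p)\in \lambda_1 A'$ and $\lambda_2(b/p)\in \lambda_2 A'$ within distance $\delta<d$, a contradiction.

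For (ii)$\Rightarrow$(i), I apply (ii) with $\epsilon_2=\epsilon_1/2$ and $t=\lfloor \delta p/2\rfloor$, obtaining sets $A_p\subset \Z{p}$, and form the measures $\mu_p=\frac{1}{p}\sum_{j\in A_p}\delta_{j/p}$ on $\mathbb{T}$. By Banach--Alaoglu some subsequence converges weakly to a Borel measure $\mu_\infty$; since $\mu_p(I)\leq |I|+1/p$ for every interval $I$, outer regularity yields $\mu_\infty\leq \mu$, so $\mu_\infty=f\,d\mu$ for some measurable $f\colon \mathbb{T}\to[0,1]$ with $\int f\geq \alpha-\epsilon_1/2$. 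The open set $F=\{(x,y)\in \mathbb{T}^2:\lambda_1 x-\lambda_2 y\in (-\delta/3,\delta/3)\pmod 1\}$ satisfies $(\mu_p\otimes \mu_p)(F)=0$ for all large $p$, because a contribution would require $(j,k)\in A_p\times A_p$ with $\lambda_1 j-\lambda_2 k$ in a symmetric window of radius $\lfloor \delta p/3\rfloor\subset \{-t,\dots,t\}$, which is forbidden. Weak convergence of product measures on $\mathbb{T}^2$ then gives $\iint_F f(x)f(y)\,dx\,dy=0$.

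The final step is to extract an honest set. I set $A_0=\{f\geq \epsilon_1/2\}$, so $\mu(A_0)\geq \int f-\epsilon_1/2\geq \alpha-\epsilon_1$ by Markov's inequality, and let $A$ be its subset of Lebesgue density points, with $\mu(A)=\mu(A_0)$. If $\lambda_1 a=\lambda_2 b$ for some $a,b\in A$, a small product ball $B(a,r)\times B(b,r)$ lies inside $F$, and the density-$1$ property forces its intersection with $A_0\times A_0$ to have positive Lebesgue measure, on which $f(x)f(y)\geq (\epsilon_1/2)^2$, contradicting $\iint_F f(x)f(y)\,dx\,dy=0$. The hard part is precisely this last passage, upgrading a measure-theoretic avoidance statement about the density $f$ into a pointwise statement $\lambda_1 A\cap \lambda_2 A=\emptyset$ for a genuine set $A$; the $\epsilon$-bookkeeping between inner regularity, the density threshold, and the selection of density points is routine but has to be carried out in the right order so that the final measure of $A$ is at least $\alpha-\epsilon_1$.
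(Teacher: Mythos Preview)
Your proof is correct, but both directions take a different route from the paper's.

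For (i)$\Rightarrow$(ii), the paper approximates $A$ directly by a finite union $A'$ of intervals (using only measurability, not inner regularity), so $\lambda_1 A'\cap\lambda_2 A'$ may be nonempty but has small measure; it then discretizes, bounds the size of $\lambda_1 A_p\cap(\lambda_2 A_p+\{-t,\dots,t\})$, and \emph{deletes} these bad points. Your detour through a compact $K$ is cleaner: it buys you an honest positive distance between $\lambda_1 A'$ and $\lambda_2 A'$, so no deletion is needed and the $\epsilon$-accounting is simpler.

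For (ii)$\Rightarrow$(i), the contrast is sharper. The paper uses (ii) for a \emph{single} prime $p$ with the fixed value $t=\lceil(|\lambda_1|+|\lambda_2|)/2\rceil$, and simply takes $\tilde A$ to be the union of open arcs of length $1/p$ centred at the points $x/p$ for $x\in A$; a one-line triangle-inequality argument then shows $\lambda_1\tilde A\cap\lambda_2\tilde A=\emptyset$. Your weak-$\ast$ limit, Radon--Nikodym density $f$, Markov cutoff, and Lebesgue density-point extraction are all correct, but constitute substantially heavier machinery for a conclusion that the paper obtains in one elementary step. The payoff of your approach would be robustness (it would adapt to settings where the discrete sets are not literally contained in a rescaled copy of the circle), but here it is not needed.
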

\begin{proof}
We begin with the easier implication $(ii)\Rightarrow(i):$ \\

By assumption, given any $\epsilon>0$ we may find a prime $p$ and a set $A\subset \Z{p}$ such that $|A|\geq (\alpha-\epsilon)p$ and such that the interval $\big\{ -\lceil \frac{|\lambda_1|+|\lambda_2|}{2} \rceil,\ldots, \lceil \frac{|\lambda_1|+|\lambda_2|}{2} \rceil \big\}\ \notin \lambda_1A-\lambda_2A$. We embed $\Z{p}$ in the natural way, namely via the homomorphism $\chi_{p}: x\mapsto e^{\frac{2\pi ix}{p}}$ and define

\begin{equation}
\tilde{A}=\Bigg\{ e^{i\theta}: \exists\; x\in A\;\text{s.t}\quad \bigg|\bigg|\theta - \frac{2\pi x}{p}\bigg|\bigg|<\frac{\pi}{p}\Bigg\}
\end{equation}

In words, $\tilde{A}$ is the disjoint union of open intervals of length $\frac{2\pi}{p}$, centred  at $\chi_{p}(x)$ for $x \in A$. 

It is clear that $\mu(\tilde{A})=|A|(1/p)\geq \alpha-\epsilon$. It remains to show that $\lambda_1\tilde{A}\cap\lambda_2\tilde{A}=\emptyset$.  Suppose that $e^{i\theta}\in \lambda_1\tilde{A}\cap\lambda_2\tilde{A}$, then there exist $x, y \in A$ such that

\begin{eqnarray*}
\bigg|\bigg|\theta - \frac{2\pi (\lambda_1x)}{p}\bigg|\bigg|<|\lambda_1|\frac{\pi}{p}\quad\text{and}\quad \bigg|\bigg|\theta - \frac{2\pi (\lambda_2 y)}{p}\bigg|\bigg|<|\lambda_2|\frac{\pi}{p} .
\end{eqnarray*}
Thus, by the triangle inequality,
\begin{eqnarray*}
\bigg|\bigg|\frac{2\pi}{p}(\lambda_1x-\lambda_2 y)\bigg|\bigg|<(|\lambda_1|+|\lambda_2|)\frac{2\pi}{p}
\end{eqnarray*}
but since $x$ and $y$ are integers this can only happen if there is a solution in $A$ to the equation $\lambda_1x-\lambda_2 y= r \mod p$,  for some $r \in \big\{ -\lceil \frac{|\lambda_1|+|\lambda_2|}{2} \rceil,\ldots, \lceil \frac{|\lambda_1|+|\lambda_2|}{2} \rceil \big\}$. This is a contradiction to our choice of $A$ and thus $\lambda_1\tilde A\cap\lambda_2 \tilde A$ must be empty. \\

\noindent $(i)\Rightarrow(ii):$\\

Suppose that there exists  a measurable set $A$ of $\mathbb{T}$ with $\mu(A)\geq \alpha-\epsilon$ such that $\lambda_1A\cap \lambda_2 A =\emptyset$. 
Since $A$ is measurable, we may find a $A'=\bigcup_{i=1}^{K} [a_{i},b_{i}]$ such that  $\mu(A\setminus A')+\mu(A'\setminus A)< \epsilon$.
Certainly $\mu(A')\geq \alpha-2\epsilon$ and furthermore 
\begin{equation*}
\begin{split}
\mu(\lambda_1A'\cap \lambda_2A')&\leq \mu\big(\lambda_1(A'\setminus A)\cap \lambda_2A'\big)+\mu\big(\lambda_1A'\cap \lambda_2(A'\setminus A)\big)\\
&\leq |\lambda_1|\mu\big(A'\setminus A\big)+ |\lambda_2|\big(A'\setminus A\big)<\big(|\lambda_1|+|\lambda_2|\big)\epsilon.\\
\end{split}
\end{equation*}

\noindent Now we `discretise' the circle in the obvious way:\\
\begin{equation*}
A_{p}=\{x:  x\in \{0,\ldots,p-1\}, x/p \in A'\} \\
\end{equation*}
and view it as a subset of $\mathbb{Z}_{p}$. As $A'$ is the union of $K$ intervals, we also have the bounds
\begin{equation}
\label{discretise}
|A_{p}|-K\leq\mu(A')p\leq |A_{p}|+K\\
\end{equation}

\noindent Next, we claim that the set $A_{p}$ is close to satisfying the desired property. 

\begin{claim}
\begin{equation*}
\left|\bigcup\limits_{-t}^{t}\lambda_1A_{p}\cap(\lambda_2A_{p}+i)\right| \leq \big(|\lambda_1|+|\lambda_2|\big)\epsilon p + tK^{2} + tK
\end{equation*}
\end{claim}

\noindent First of all, note that $\lambda_1A'$ and $\lambda_2 A'$ are also the union of at most $K$ intervals, and hence the set $\lambda_1A' \cap \lambda_2 A'$ may certainly be expressed as the union of at most $K^{2}$ intervals.

\noindent We also have the following inequality, 

\begin{eqnarray}
\mu\bigg(\lambda_2 A'+\Big[-\frac{t}{p},\frac{t}{p}\Big]\setminus \lambda_2 A'\bigg)\leq 2tK/p. 
\end{eqnarray}

\noindent Thus,
\begin{eqnarray*}
\mu(\lambda_1A'\cap\lambda_2 A' +\{-t/p.\ldots, t/p\})&\leq&\mu(\lambda_1A'\cap\lambda_2 A')+\mu(\lambda_2 A'+[-t/p,t/p]\setminus \lambda_2 A')\\
&<& \big(|\lambda_1|+|\lambda_2|\big)\epsilon +2tK/p
\end{eqnarray*}

\noindent Now, applying equation \eqref{discretise} to the set $\lambda_1A' \cap (\lambda_2 A'+\{-t/p \ldots, t/p\})$, which is the union of at most $2tK^{2}$ intervals: 

\begin{equation*}
\left|\lambda_1A_{p}\cap \big(\lambda_2 A_{p} +\{-t, \ldots, t\}\right|\leq \big(|\lambda_1|+|\lambda_2|\big)\epsilon p +2tK^{2} +2 tK
\end{equation*}
Hence, by deleting at most $\big(|\lambda_1|+|\lambda_2|\big)\epsilon p  +2tK^{2} + 2tK$ points from $A_{p}$ (namely those in $\lambda_1A_{p}\cap(\lambda_2A_{p}+\{-t, \ldots, t\})$, we can find a set $A$ in $\mathbb{Z}_{p}$ such that $0,\ldots, t \notin A+ \lambda A$ and $|A| \geq (\alpha-\big(|\lambda_1|+|\lambda_2|\big)\epsilon)p-O_{K}(\delta)p\geq(\alpha-2(|\lambda_1|+|\lambda_2|)\epsilon))p$, provided we have chosen $\delta$ sufficiently small.
\end{proof}

The task now is that of finding sets of large (Lebesgue) measure in $\mathbb{T}$ such that $\lambda_1A\cap\lambda_2A=\emptyset$. Firstly, note that we trivially have the upper bound $\mu(A)\leq 1/2$ since $\mu(\lambda_iA)\geq \mu(A)$. 

We address below the special case where $\lambda_1=1$ and begin by showing that in fact we cannot have that $\mu(A)=1/2$.

\begin{proposition}
Let $A$ be a subset of $\mathbb{T}$ such that $A\cap \lambda A=\emptyset$, then $\mu(A)<1/2$.
\end{proposition}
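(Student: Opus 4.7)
The plan is to combine the straightforward measure-preserving bound $\mu(\lambda A)\ge \mu(A)$ (already used in the paragraph preceding the proposition) with the ergodicity of the multiplication-by-$\lambda$ map on the circle, in order to upgrade the easy inequality $\mu(A)\le 1/2$ to the strict inequality. Throughout I work under the implicit hypothesis $|\lambda|\ge 2$: for $\lambda\in\{0,1\}$ the statement is either trivial or vacuous, and for $\lambda=-1$ it is simply false, as witnessed by $A=[0,1/2)$. Write $T\colon \mathbb{T}\to\mathbb{T}$ for the endomorphism $T(x)=\lambda x\bmod 1$, a $|\lambda|$-to-one Lebesgue-measure-preserving map.

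First I would record that, since $A\subseteq T^{-1}(T(A))$ and $T$ preserves Lebesgue measure, $\mu(\lambda A)=\mu(T(A))\ge \mu(A)$; together with disjointness $A\cap \lambda A=\emptyset$ this yields $\mu(A)\le 1/2$, which is only half of what we want.

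The heart of the argument is to suppose for contradiction that $\mu(A)=1/2$. Then $\mu(\lambda A)=1/2$ as well, forcing $A\sqcup \lambda A$ to exhaust $\mathbb{T}$ up to a null set, and hence $\lambda A=A^{c}$ a.e. Pulling back by $T$ one has $T^{-1}(A^{c})\supseteq A$, and the measure comparison $\mu(T^{-1}(A^{c}))=\mu(A^{c})=1/2=\mu(A)$ upgrades this inclusion to an equality modulo a null set. Equivalently $T^{-1}(A)=A^{c}$ a.e., which in terms of $f=\mathbf{1}_{A}$ reads $f\circ T = 1 - f$ a.e. Iterating once yields $f\circ T^{2}=f$ a.e., so $f$ is $T^{2}$-invariant.

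The contradiction then comes from classical ergodic theory: since $|\lambda^{2}|\ge 4$, multiplication by $\lambda^{2}$ is an ergodic endomorphism of $\mathbb{T}$, so every $T^{2}$-invariant measurable function is a.e. constant. Applied to the $\{0,1\}$-valued $f=\mathbf{1}_{A}$ this forces $\mu(A)\in\{0,1\}$, contradicting $\mu(A)=1/2$. The only step that requires a bit of care is the passage from the set containment $T^{-1}(A^{c})\supseteq A$ to a.e. equality, which is handled by the measure comparison indicated above; everything else is an immediate consequence of the measure-preserving / ergodic character of the circle endomorphism $x\mapsto \lambda x$.
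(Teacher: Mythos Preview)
Your proof is correct and follows the same route as the paper's: assuming $\mu(A)=1/2$, one shows $A$ is (mod null sets) invariant under multiplication by $\lambda^{2}$ and then invokes ergodicity of that map to derive a contradiction. The only difference is cosmetic---you work carefully with preimages and the indicator function, whereas the paper argues more tersely with images---and your explicit remark that $|\lambda|\ge 2$ is needed (with the $\lambda=-1$ counterexample) is a welcome clarification the paper omits.
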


\begin{proof}
Suppose that there exists such a subset $A\subset \mathbb{T}$ with $\mu(A)=1/2$. Then $(-\lambda) A=\bar A=\mathbb{T}\setminus A$ (modulo a set of null measure) and, since multiplication by $-\lambda$ is a measure preserving transformation of the circle, it also follows that $\lambda^2A=A$. However, it is well known that multiplication by an integer is an ergodic transformation, that is to say, the only invariant subsets have either null or full measure. This is a contradiction. 
\end{proof}

 We will again consider the action of multiplication by $-\lambda$ as an ergodic, measure preserving transformation of $\mathbb{T}$. The following result was first proven by Rokhlin in \cite{Rokhlin:1963uv} . We won't need here the full generality of Rokhlin's Lemma as the transformations we are interested here are ergodic. We include here a proof in the setting of ergodic transformations of the circle, as it is very short and simple and it also contains the underlying ideas of the `quatitative version' we give in the following section. For the more general result, we recommend the reader to see \cite{Heinemann:2001p517} which presents a remarkably simple proof of Rohklin's Lemma, even when the transformations are not invertible.
 
 \begin{lemma}[Rokhlin's Lemma]
 \label{rokhlin}
 Let $\phi: \mathbb{T}\to\mathbb{T}$ be an ergodic, measure preserving, mesurable map. Given any $n\in \mathbb{N}\setminus\{0\}$ and any $\epsilon>0$, there exists a measurable set $B\subset \mathbb{T}$ such that 
 \begin{itemize}
\item[(i)] $\Big(\phi^{i}(B)\Big)_{0\leq i\leq n-1}\quad$ are pairwise disjoint.\\
 \item[(ii)] $\mu(B)\geq \frac{1}{n}-\epsilon$.
 \end{itemize}
\end{lemma}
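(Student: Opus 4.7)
The plan is to construct a Kakutani--Rokhlin skyscraper over a small base set and then prune each tall column by retaining every $n$-th level. First I choose any measurable $A \subset \mathbb{T}$ with $0 < \mu(A) \le \epsilon/2$. The set of $x \in \mathbb{T}$ whose forward orbit never revisits $A$ is invariant under $\phi$ modulo null sets and has trivial intersection with $\phi^{-1}(A)$, so by ergodicity it is null; hence the first return time
\begin{equation*}
r_A(x) \,=\, \min\bigl\{k \ge 1 : \phi^k(x) \in A\bigr\}
\end{equation*}
is finite a.e.\ on $A$. Partition $A = \bigsqcup_{k \ge 1} A_k$ with $A_k = \{x \in A : r_A(x) = k\}$. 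By minimality of $r_A$, for each $k$ the levels $A_k, \phi(A_k), \ldots, \phi^{k-1}(A_k)$ are pairwise disjoint (none meeting $A$ for $j \ge 1$), columns over distinct $A_k$ are mutually disjoint, and ergodicity forces $\bigsqcup_{k}\bigsqcup_{j<k}\phi^j(A_k) = \mathbb{T}$ up to null sets, giving Kac's identity $\sum_{k \ge 1}k\mu(A_k) = 1$.

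For each $k \ge n$ write $k = q_k n + s_k$ with $0 \le s_k < n$ and set
\begin{equation*}
B_k \,=\, \bigsqcup_{j=0}^{q_k - 1}\phi^{jn}(A_k), \qquad B \,=\, \bigsqcup_{k \ge n} B_k.
\end{equation*}
For each $0 \le i \le n-1$, $\phi^i(B_k)$ is the disjoint union of the levels $\phi^{jn + i}(A_k)$, $0 \le j \le q_k - 1$, which sit at distinct heights $jn + i \le q_k n - 1 \le k - 1$ inside the $k$-th column. Since distinct columns are disjoint, the sets $\phi^0 B, \phi^1 B, \ldots, \phi^{n-1} B$ are pairwise disjoint, yielding~(i). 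Counting measures level-by-level and using $q_k n \ge k - n + 1$,
\begin{equation*}
\mu(B) \,=\, \sum_{k \ge n} q_k \mu(A_k) \,\ge\, \frac{1}{n}\sum_{k \ge n}(k - n + 1)\mu(A_k),
\end{equation*}
which, combined with Kac's identity and $\sum_{k \ge n}\mu(A_k) \le \mu(A) \le \epsilon/2$, yields $\mu(B) \ge 1/n - 2\mu(A) \ge 1/n - \epsilon$, proving~(ii).

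The main obstacle I expect is verifying disjointness of distinct column levels together with the identity $\mu(\phi^j(A_k)) = \mu(A_k)$ when $\phi$ fails to be invertible, since the lemma is ultimately applied to the $|\lambda|$-to-one endomorphism $x \mapsto -\lambda x$ on $\mathbb{T}$ for which both properties can fail in general. I anticipate the paper either interprets $\phi^i(B)$ in the preimage sense $\phi^{-i}(B)$, under which the Kakutani--Rokhlin construction goes through unchanged, or else shrinks $A$ further so that each fibre of $\phi^j$ with $j < k$ meets $A_k$ in at most one point, reducing the argument to the invertible case up to a negligible error.
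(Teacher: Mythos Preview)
Your Kakutani--Rokhlin tower argument is the standard proof for \emph{invertible} transformations and is correct in that setting. You are right to flag the non-invertible case as the obstacle: for the endomorphism $x\mapsto -\lambda x$ the levels $\phi^j(A_k)$ need not be pairwise disjoint, $\mu(\phi^j(A_k))$ need not equal $\mu(A_k)$, and your derivation of Kac's identity via a forward-image partition of $\mathbb{T}$ breaks down simultaneously with the disjointness claim in (i) and the level-counting in (ii). Neither of the fixes you anticipate is what the paper actually does.

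The paper avoids forward images entirely in the construction. It fixes $E_0=E$ with $0<\mu(E)\le\epsilon$ and sets recursively $E_{i+1}=\phi^{-1}(E_i)\setminus\bigcup_{j\le i}E_j$; equivalently, $E_i$ is the set of points whose forward orbit first enters $E$ at time~$i$. Disjointness of the $E_i$ is then by construction rather than by any tower argument, and one has the one-step chain $\phi(E_{i+1})\subseteq E_i$. Taking $B=\bigcup_{j\ge 1}E_{jn}$, this chain yields $\phi^i(B)\subseteq\bigcup_{j\ge 1}E_{jn-i}$, so the forward images $\phi^0(B),\ldots,\phi^{n-1}(B)$ land in disjoint residue classes of the family $\{E_k\}$ and (i) follows with no appeal to injectivity or forward measure preservation. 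For (ii) the paper switches to the preimage direction: from $E_{jn+i}\subseteq\phi^{-i}(E_{jn})$ one gets $\bigcup_{i=0}^{n-1}\phi^{-i}(B)\supseteq\mathbb{T}\setminus\bigcup_{i=0}^{n-1}E_i$, and since $\mu(\phi^{-i}(B))=\mu(B)$ this gives $n\mu(B)\ge 1-n\mu(E)\ge 1-n\epsilon$. Thus the measure bookkeeping stays on the preimage side where measure is preserved, and the only use of forward images is the innocuous containment $\phi(E_{i+1})\subseteq E_i$.
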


\begin{proof}
Pick any measurable set $E\subset \mathbb{T}$ with $0<\mu(E)\leq \epsilon$, for example, the open ball centred at $0$ of radius $\epsilon/2$. We construct a family of sets as follows
\begin{equation}
E_{0}=E; \qquad E_{i+1}=\phi^{-1}(E_{i})\setminus \bigcup\limits_{0\leq j\leq i}E_{j}\\
\end{equation}

The sets $(E_{i})_{i\geq0}$ are, by construction, pairwise disjoint and for any $i \in \mathbb{N}$ we have the inclusion  
\begin{equation}
\label{chain}
\phi(E_{i+1})\subseteq E_{i}.
\end{equation}

\begin{claim}
\begin{equation}
\bigcup\limits_{i\geq0}E_{i}=\mathbb{T}\qquad \text{$\mu$-almost everywhere}\\
\end{equation}
\end{claim}

\noindent Set $F=\bigcup\limits_{i\geq0}E_{i}=\bigcup\limits_{i\geq0}\phi^{-i}(E_{0})$	and note that
\begin{equation*}
\phi^{-1}(F)=\bigcup\limits_{i\geq1}\phi^{-i}(E_{0})\subseteq F 
\end{equation*}

On the other hand, as $\phi$ is measure preserving, $\mu\big(\phi^{-1}(F)\big)=\mu(F) \Rightarrow \phi^{-1}(F)=F$. Thus, by ergodicity, either $\mu(F)=0$ or $\mu(F)=1$. But $F$ contains $E$, which has a strictly positive measure, so it must be the case that $\mu(F)=1$.

\noindent Let $B=\bigcup_{j\geq1}E_{jn}$, we claim that $B$ meets the requirements of Rokhlin's Lemma:

\begin{itemize}
\item[(i)] For convenience, let $J_{r}$ denote the set $\{ k\in \mathbb{N}^{*}: k=r\mod n\}$. From \eqref{chain} we have that for each $i=0,1,\dots, n-1$
\begin{equation}
\phi^{i}(B)=\phi^{i}\left ( \bigcup\limits_{j\in J_{0}} E_{j}\right)=\bigcup\limits_{j\in J_{0}}\phi^{i}(E_{j})\subseteq \bigcup\limits_{j\in J_{0}}E_{j-i}=\bigcup\limits_{j\in J_{n-i}}E_{j}
\end{equation}
and therefore the sets $\Big(\phi^{i}(B)\Big)_{i=0}^{n-1}$ are pairwise disjoint.
\item[(ii)] We have the inclusion
\begin{equation}
\bigcup\limits_{i=0}^{n-1}\phi^{-i}(A)\supseteq\bigcup\limits_{i\geq0}E_{i}\setminus\bigcup\limits_{i\geq0}^{n-1}E_{i} =\mathbb{T}\setminus \bigcup\limits_{i\geq0}^{n-1}E_{i}
\end{equation}
now recall we chose $E_{0}$ with $\mu(E_{0})\leq \epsilon$, hence
\begin{equation*}
\mu\left(\bigcup\limits_{i=0}^{n-1}\phi^{-i}(A)\right)=n\mu(B)\geq 1- n\epsilon
\end{equation*}
and therefore $\mu(B)\geq \frac{1}{n}-\epsilon$.
\end{itemize}
\end{proof}

\noindent As an immediate consequence of Rokhlin's Lemma we have: 
\begin{proposition}
For any given $\epsilon>0$, there exists a set  $A\subset \mathbb{T}$ such that $A\cap \lambda A=\emptyset$ and $\mu(A)>1/2-\epsilon$. 
\end{proposition}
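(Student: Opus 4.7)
The plan is to apply Rokhlin's Lemma directly with $n=2$ to the transformation $\phi:\mathbb{T}\to\mathbb{T}$ given by $\phi(x)=\lambda x \pmod 1$. For $|\lambda|\ge 2$ this map is measure preserving with respect to the Lebesgue measure and ergodic (this is the classical fact about the doubling map and its generalisations, and it was already invoked implicitly in the preceding proposition). Lemma~\ref{rokhlin} then produces a measurable set $B\subset\mathbb{T}$ with $\mu(B)\ge 1/2-\epsilon$ such that $B$ and $\phi(B)=\lambda B$ are disjoint. Setting $A=B$ gives exactly the required conclusion $A\cap\lambda A=\emptyset$ with $\mu(A)>1/2-\epsilon$.

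The only things worth noting are the degenerate values of $\lambda$. If $\lambda=0$, then $\lambda A=\{0\}$, so any set of measure $1/2-\epsilon$ avoiding the origin works; if $\lambda=1$ the requirement $A\cap\lambda A=\emptyset$ forces $A=\emptyset$, and the statement is vacuous in the spirit it is used (the interesting dilation factors in Lemma~\ref{transfer} and Theorem~\ref{large} satisfy $|\lambda|\ge 2$); if $\lambda=-1$ then multiplication by $\lambda$ is not ergodic, but one may take $A=(0,1/2-\epsilon)$ by hand, since then $-A\subset (1/2+\epsilon,1)$ is disjoint from $A$. In the main case $|\lambda|\ge 2$, no extra work beyond citing Rokhlin is needed.

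There is really no substantial obstacle here: the whole content of the proposition is packaged in Rokhlin's Lemma with the specific choice $n=2$ and $\phi=\text{multiplication by }\lambda$. The interesting part of the paper is the quantitative strengthening referred to at the end of the preceding proof (the discretised Rokhlin argument needed to feed Lemma~\ref{transfer} and thereby prove Theorem~\ref{large}), not this immediate qualitative consequence.
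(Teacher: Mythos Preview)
Your proposal is correct and matches the paper's own argument: the paper states the proposition as ``an immediate consequence of Rokhlin's Lemma'' applied to multiplication by $\lambda$ on $\mathbb{T}$, which is precisely the $n=2$ case you spell out. Your additional remarks on the degenerate values $\lambda\in\{-1,0,1\}$ are not in the paper but are harmless clarifications.
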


Theorem~\ref{large} now easily follows: by the above,  for every $\lambda \in \Z{}$ and any $\epsilon>0$, there exists $A\subset\mathbb{T}$
such that $\mu(A)>1/2-\epsilon$. Applying Lemma~\ref{transfer} to the set $A$ we obtain $A'\subset \Z{p}$ (for some sufficiently large prime $p$) such that $(A'+\lambda A')\cap [0,\delta p]=\emptyset$ and hence in particular
$$|A'+\lambda A'|\leq (1-\delta)p,$$
as claimed.
\section{A Quantitative Bound}

 Since we are considering a very specific family of maps, it is possible to make the construction in  Rohklin's Lemma  very explicit. Indeed , one can avoid appealing to measure theory or limiting arguments, which has the added advantage of providing an explicit dependency between the constants involved. At a first  instance we will assume that $\lambda$ is a positive integer and later on we will show how one can remove this assumption.

\noindent Let $m\in \mathbb{N}$ and set $E_0=[0, \lambda^{-m})$, then  for each $i \in \NN$ define
\begin{eqnarray*}
E_i=\{x\in \mathbb{T}: \lambda^ix\in E_0\; \text{but}\; \lambda^j x \notin E_0\; \text{for all}\; j<i\} 
\end{eqnarray*}
 More combinatorially, we may think of  $E_{i}$ as the set of $x \in \mathbb{T}$ such that the first occurrence of $m$ consecutive zeros in the $\lambda$-ary expansion of $x$ appears at position $i$.
 
  Clearly, the sets $E_{i}$ are disjoint and satisfy the inclusion $\lambda\cdot E_{i+1}\subseteq E_{i}$. Furthermore, we also have that all but a small measure of the space:

 \begin{lemma}
 For any $n\in \mathbb{N}$, 
 $$\mu\left(\bigcap_{i=0}^{n-1}\bar{E_{i}}\right)\leq (1-\lambda^{-m})^{\frac{n}{m}}\leq e^{-\lambda^{-m}\frac{n}{m}}. $$
 \end{lemma}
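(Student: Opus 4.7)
The plan is to exploit the fact that, for $x$ distributed according to Lebesgue measure on $\mathbb{T}$, the digits of $x$ in base $\lambda$ form an i.i.d.\ sequence $X_1, X_2, \ldots$ uniform on $\{0, 1, \ldots, \lambda-1\}$, so that conditions on disjoint digit-blocks yield independent events. The condition $\lambda^i x \in E_0 = [0, \lambda^{-m})$ translates to $X_{i+1} = X_{i+2} = \cdots = X_{i+m} = 0$, i.e.\ a zero-block of length $m$ starting at position $i+1$.

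First I would unpack the intersection. By definition, $x \in E_i$ means $\lambda^i x \in E_0$ and no earlier shift lies in $E_0$. Hence if some $\lambda^j x \in E_0$ with $j < n$, then $x \in E_{j^{*}}$ for the minimal such $j^{*}$, and so $x \notin \bar{E_{j^{*}}}$. Conversely, if $\lambda^i x \notin E_0$ for all $i < n$, then $x \notin E_i$ for any such $i$. This shows
\begin{equation*}
\bigcap_{i=0}^{n-1}\bar{E_i} \;=\; \big\{ x \in \mathbb{T} : \lambda^i x \notin E_0 \text{ for all } 0 \le i \le n-1 \big\},
\end{equation*}
i.e.\ the set of $x$ whose base-$\lambda$ expansion contains no zero-block of length $m$ beginning at any of the positions $1, 2, \ldots, n$.

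Next, I would restrict attention to a sparse collection of these conditions. Set $K := \lceil n/m \rceil$, and for $k = 0, 1, \ldots, K-1$ let $C_k$ be the event that the $m$-block of digits at positions $km+1, km+2, \ldots, (k+1)m$ is \emph{not} identically zero. Since $(K-1)m \le n-1$, each $C_k$ corresponds to the $i = km$ condition in the intersection above, so
\begin{equation*}
\bigcap_{i=0}^{n-1}\bar{E_i} \;\subseteq\; \bigcap_{k=0}^{K-1} C_k.
\end{equation*}
The digit-blocks underlying $C_0, \ldots, C_{K-1}$ are pairwise disjoint, so these $K$ events are mutually independent, each of Lebesgue measure $1 - \lambda^{-m}$.

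Combining these pieces,
\begin{equation*}
\mu\bigg(\bigcap_{i=0}^{n-1}\bar{E_i}\bigg) \;\le\; \prod_{k=0}^{K-1}\mu(C_k) \;=\; (1-\lambda^{-m})^{K} \;\le\; (1-\lambda^{-m})^{n/m} \;\le\; e^{-\lambda^{-m}\,n/m},
\end{equation*}
where the second inequality uses $K \ge n/m$ together with $1 - \lambda^{-m} \in (0,1)$, and the last uses $1 - x \le e^{-x}$. There is no substantive obstacle here: the only care needed is the index bookkeeping to fit $\lceil n/m \rceil$ disjoint length-$m$ blocks into the window $[1,n+m-1]$, and base-$\lambda$ digit independence under Lebesgue measure is completely standard.
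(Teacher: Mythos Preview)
Your proof is correct and follows essentially the same approach as the paper: interpret the event via base-$\lambda$ digits, restrict to $\lceil n/m\rceil$ disjoint length-$m$ blocks, and use independence. Your write-up is in fact slightly tidier than the paper's, since you make explicit the identity $\bigcap_{i<n}\bar{E_i}=\{x:\lambda^i x\notin E_0\text{ for all }i<n\}$ and verify the index bound $(K-1)m\le n-1$, whereas the paper leaves these implicit.
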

\begin{proof}
We turn to the combinatorial interpretation of the sets $E_{i}$: to estimate $\mu\left(\bigcap_{i=0}^{n-1}\bar{E_{i}}\right)$, it will be sufficient to bound the number of sequences in $[\lambda]^{n+m}=\{0,1,\ldots, \lambda-1 \}^{n+m}$ without any $m$ consecutive zeros. In other words we are trying to bound above the probability that a uniformly chosen sequence in $[\lambda]^{n+m}$ contains $m$ consecutive zeroes. 

To do so, split the sequence into at least $k=\left \lfloor \frac{n+m}{m} \right \rfloor$ disjoint `blocks' of size $m$. The probability that each block is not a sequence of $m$ consecutive zeroes is $(1-\lambda^{-m})$. Hence, by independence, the probability that none of the blocks is a sequence of $m$ consecutive zeroes is at most 
$$(1-\lambda^{-m})^k\leq(1-\lambda^{-m})^{\frac{n}{m}}, $$ 	
as claimed.
 \end{proof}

\noindent Now for any $t\geq m$, let
$$A_{t}=E_{1}\sqcup E_{3}\ldots\sqcup E_{2t+1},$$
we have that $\lambda A_{t}\subseteq E_{0}\sqcup E_{2}\ldots\sqcup E_{2t}$ and thus $A_{t}\cap \lambda A_{t}=\emptyset$. Furthermore, it is easy to see that
 \begin{eqnarray*}
 \mu(A_{t})&\geq&\frac{1}{2}\left (\sum_1^{2t}\mu(E_i)\right)\geq \frac{1}{2}\left( 1-(1-\lambda^{-m})^{\frac{2t+1}{m}}-\lambda^{-m} \right)\\
 &=& \frac{1}{2}- \frac{1}{2}(1-\lambda^{-m})^{\frac{2t+1}{m}} -\frac{1}{2}\lambda^{-m}
 \end{eqnarray*}

Note that our set $A_t$ is already the union of disjoint intervals and therefore we do not need to approximate it as such in order to transfer it to $\Z{p}$. Furthermore, is easy too see that each $E_i$ is the union of at most $\lambda^{i}$ intervals and hence $A_t$ is the union of at most $\lambda^{4t}$ intervals. 

In order to remove the assumption that $\lambda$ is positive, note that it was only used when estimating 
$\mu\left(\bigcup_0^{n-1} E_i^{(\lambda)} \right )$. If $\lambda <0$, then one can obtain an estimate for the measure of the union by simply noting that that applying the transformation twice is the same as multiplication by $\lambda^2>0$. It is clear that,
$$\bigcup_{i=0}^{2(n-1)}E_i^{(\lambda)}\supset\bigcup_{i=0}^{n-1}E_i^{(\lambda^2)},$$
and thus we are well placed to use the previous estimate to bound the right hand side of the inclusion.

 Finally, setting $m=C_1\log\epsilon^{-1}$ and $t=C_2\epsilon^{-1}(\log\epsilon^{-1})^2$ where $C_1$ and $C_2$ are some sufficiently large constants, and transferring to $\Z{p}$ in the usual manner, we obtain the following quantitative version of Theorem $\ref{large}$:

\begin{theorem}
\label{large2}
Given any $\epsilon>0$ and $\lambda\neq 0  \in \Z{}$, for all sufficiently large primes $p$, there exists a set $A\subset \Z{p}$ with $|A|\geq 1/2-\epsilon$ such that $|A+\lambda A|\leq (1-\epsilon^{C\epsilon^{-1}})p$ for some universal constant $C$.  \\
\end{theorem}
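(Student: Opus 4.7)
The plan is to combine the explicit construction just developed above with the transfer principle of Lemma~\ref{transfer}, tracking all quantitative dependencies. For positive $\lambda$ I would first choose $m=\lceil C_1\log(1/\epsilon)\rceil$ large enough that $\lambda^{-m}\le\epsilon$, and then $t=\lceil C_2\,\epsilon^{-1}(\log\epsilon^{-1})^{2}\rceil$ large enough that $(1-\lambda^{-m})^{(2t+1)/m}\le \exp(-\lambda^{-m}(2t+1)/m)\le \epsilon$. Substituting into the measure estimate proved in the lemma just above,
\[
\mu(A_t)\;\ge\;\tfrac{1}{2}-\tfrac{1}{2}(1-\lambda^{-m})^{(2t+1)/m}-\tfrac{1}{2}\lambda^{-m},
\]
yields $\mu(A_t)\ge 1/2-\epsilon$, and by construction $A_t\cap\lambda A_t=\emptyset$.

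I would then apply Lemma~\ref{transfer} with $\lambda_1=1$ and $\lambda_2=-\lambda$ (flipping the sign of $A_t$ if needed to match the empty-intersection condition) to produce $A'\subset\Z{p}$ with $|A'|\ge(1/2-O(\epsilon))p$ such that $A'+\lambda A'$ misses an arithmetic interval of length $2s+1$. The size of this interval is dictated by the number of constituent intervals $K$ of $A_t$: the proof of Lemma~\ref{transfer} allows $s$ as large as a constant multiple of $\epsilon p/K^{2}$. Since the combinatorial description of the $E_i$ gives $K\le \lambda^{4t}$ and $t\asymp \epsilon^{-1}(\log\epsilon^{-1})^{2}$, evaluating produces a deficit $\delta=(2s+1)/p\ge \epsilon^{C/\epsilon}$ for a sufficiently large absolute constant $C$ (the dependence on $\lambda$ and the slowly-growing $\log$ factors get absorbed into $C$). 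A final constant rescaling of $\epsilon$ restores the exact density $1/2-\epsilon$ demanded by the statement.

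To drop the sign hypothesis on $\lambda$, I would use the inclusion $\bigcup_{i=0}^{2(n-1)}E_i^{(\lambda)}\supseteq\bigcup_{i=0}^{n-1}E_i^{(\lambda^{2})}$ already recorded above: this imports the measure estimate for the positive integer $\lambda^{2}$ at the cost of doubling $t$, which is absorbed into $C$. The only genuine bookkeeping challenge I anticipate is controlling the three error terms simultaneously---the measure loss in $A_t$, the $(|\lambda_1|+|\lambda_2|)\epsilon p$ slack in Lemma~\ref{transfer}, and the $O(sK^{2})$ discretisation term---each of which is governed by a single parameter ($t$, $\epsilon$, and $s$ respectively). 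Matching them to preserve both the density $\ge (1/2-\epsilon)p$ and the deficit $\ge \epsilon^{C/\epsilon}p$ is a routine but fussy calculation rather than a conceptual obstacle.
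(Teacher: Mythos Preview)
Your proposal is correct and follows the paper's own argument essentially verbatim: the paper likewise sets $m=C_1\log\epsilon^{-1}$ and $t=C_2\,\epsilon^{-1}(\log\epsilon^{-1})^{2}$, uses the bound $K\le\lambda^{4t}$ on the number of constituent intervals of $A_t$, handles negative $\lambda$ via the inclusion $\bigcup_{i=0}^{2(n-1)}E_i^{(\lambda)}\supseteq\bigcup_{i=0}^{n-1}E_i^{(\lambda^{2})}$, and then transfers to $\Z{p}$. One small remark: ``flipping the sign of $A_t$'' does not convert $A_t\cap\lambda A_t=\emptyset$ into $A_t\cap(-\lambda)A_t=\emptyset$ (negation commutes with dilation), so to match Lemma~\ref{transfer} with $\lambda_2=-\lambda$ you should instead run the explicit construction for the map $x\mapsto -\lambda x$ from the outset---which your final paragraph already accommodates.
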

\section{Comments and further questions}
Theorem~\ref{large} and Theorem~\ref{smallA} show that one cannot expect to obtain sharp bounds independent of the size of $A$. This motivates the following definition:
\begin{definition}
Given a prime $p$, a sequence of integers $\bar{\lambda} = (\lambda_1,\ldots,\lambda_k)$, and $\alpha \in [0,1]$, set
$$\ex\big( \Z{p},\bar{\lambda},\alpha \big)= \inf\left\{  \frac{| \lambda_{1}A+ \ldots + \lambda_{k}A |}{|A|} \,:\, A \subset \Z{p} \text{ with } |A| \le \alpha p \right\}.$$ 
\end{definition}

Thus, when $\bar\lambda = (1,-2)$, for example, we have $\ex\big( \Z{p},\bar\lambda,\alpha \big) = 3$ if $\alpha > 0$ is sufficiently small, and $\ex\big( \Z{p},\bar\lambda,\alpha \big) \le 1/\alpha$ for every $\alpha \le 1/2$. (So, in particular, $\ex\big( \Z{p},\bar\lambda,\alpha \big) \to 2$ as $\alpha \to 1/2$ and $p\to \infty$). This suggests that $\ex\big( \Z{p},\bar\lambda,\alpha \big)$ might exhibit some non-trivial behaviour between the two extremes. The main open question, of course,  is to understand the asymptotic behaviour of $\ex\big( \Z{p},\bar\lambda,\alpha \big)$ for all values of $\alpha \in [0,1]$.  

\begin{question}
Determine the value of  $\ex\big( \Z{p},\bar \lambda,\alpha \big)$ as $p \to \infty$ for all $\alpha \in [0,1]$.
\end{question}

	In this work, we have shown that Conjecture $\ref{con1}$ holds in the case where $\bar\lambda=(1,\lambda)$. Nonetheless, the transference principle still applies to an arbitrary $\bar\lambda=(\lambda_1,\lambda_2)$ and hence it suffices to construct large sets $A\subset \mathbb{T}$ (of measure $1/2-\epsilon$)  for which $\lambda_1(A)\cap\lambda_2(A)=\emptyset$. 
	
\begin{question}
\label{circle}
Let $\lambda_1, \lambda_2 \in \Z{}$ and suppose $A\subseteq \mathbb{T}$  is a Lebesgue measurable set such that $\lambda_1A\cap \lambda_2 A=\emptyset$. How large can $\mu(A)$ be? 
\end{question}

However, when dealing with higher order sumsets, for instance $\bar\lambda=(\lambda_1,\lambda_2,\lambda_3)$, it  looks likely that substantial new ideas will be required.

\begin{akn}
The work was inspired by a a series of conversations with Pablo Candela and David Saxton at the University of Cambridge. The author is very grateful to Ben Green, who pointed out the fact that one could remove the measure theory to obtain a quantitative version of the theorem. Last but not least, a special thanks to Robert Morris for his many helpful suggestions to improve the clarity and exposition of this paper. 
\end{akn}

\bibliographystyle{plain}
\bibliography{references} 
\end{document}